\newcommand{\syusei}[1]{{\color{red} #1}}
\renewcommand{\syusei}[1]{{#1}}
\numberwithin{equation}{section}
\newtheorem{theorem}{Theorem}[section]
\newtheorem{proposition}[theorem]{Proposition}
\newtheorem{claim}[theorem]{Claim}
\theoremstyle{definition}
\newtheorem{definition}[theorem]{Definition}
\theoremstyle{remark}
\newtheorem{remark}[theorem]{Remark}
\newtheorem{example}[theorem]{Example}
\newcommand{\id}{\operatorname{id}}
\newcommand{\Frac}{\operatorname{Frac}}
\newcommand{\Min}{\operatorname{Min}}
\newcommand{\fm}{\frak{m}}
\newcommand{\fn}{\frak{n}}
\begin{document}
\title[An elementary proof of Cohen-Gabber theorem]
{An elementary proof of Cohen-Gabber theorem in the equal characteristic $p>0$ case}

\author[K. Kurano]{Kazuhiko Kurano}
\address{Department of Mathematics, School of Science and Technology,
Meiji University, Higashimata 1-1-1, Tama-ku, Kawasaki 214-8571, Japan}
\email{kurano@isc.meiji.ac.jp}

\author[K. Shimomoto]{Kazuma Shimomoto}
\address{Department of Mathematics, College of Humanities and Sciences, Nihon University, Setagaya-ku, Tokyo 156-8550, Japan}
\email{shimomotokazuma@gmail.com}

\thanks{2000 {\em Mathematics Subject Classification\/}: Primary 12F10; Secondary 13B40, 13J10}
\keywords{Coefficient field, complete local ring, differentials, $p$-basis}

\thanks{The first author was partially supported by JSPS KAKENHI Grant 15K04828.
The second author was partially supported by Grant-in-Aid for Young Scientists (B) 25800028}


\begin{abstract}
The aim of this article is to give a new proof of Cohen-Gabber theorem in the equal characteristic $p>0$ case.
\end{abstract}

\maketitle

\section{Introduction}

Cohen proved the structure theorem on complete local rings in \cite{Cohen} and since then, it has been used as a basic tool in commutative algebra. Since our main concern is in rings of positive characteristic, let us recall its statement in the equal characteristic $p>0$ case, where $p$ is a prime number. Let $(A,\fm,k)$ be a complete local ring of dimension $d \ge 0$ and of equal characteristic $p>0$. In particular, $k$ is a field of characteristic $p$. Then there exists a coefficient field $\phi: k \to A$, together with a system of parameters $x_1,\ldots,x_d$ of $A$ such that there is a module-finite extension $\phi(k)[[x_1,\ldots,x_d]] \subset A$, where $\phi(k)[[x_1,\ldots,x_d]]$ is a complete regular local ring of dimension $d$. The aim of this article is to give a new and elementary proof of Cohen-Gabber theorem which is stated as follows:

\begin{theorem}[Cohen-Gabber]
\label{CohenGabber}
Assume that $(A,\fm,k)$ is a complete local ring of dimension $d \ge 0$ and of equal characteristic $p>0$ and let $\pi:A \to k=A/\fm$ be the quotient map. Then there exists a system of parameters $y_1,\ldots,y_d$ of $A$ and a ring map $\phi:k \to A$ such that the following hold: $\pi \circ \phi=\id_k$, the natural map
$$
\phi(k)[[y_1,\ldots,y_d]] \subset A
$$
is module-finite, and $\Frac(\phi(k)[[y_1,\ldots,y_d]]) \to \Frac(A/P)$ is a separable field extension for any minimal prime $P$ of $A$ such that $\dim A/P = d$.
\end{theorem}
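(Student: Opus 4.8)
The plan is to begin from an arbitrary coefficient field and system of parameters, as supplied by the classical Cohen theorem recalled above, and then to correct them so as to force separability at the top-dimensional minimal primes. First I would clear away the structural part. A coefficient field of $A/\sqrt{0}$ lifts, through the surjection $A\twoheadrightarrow A/\sqrt{0}$ of complete local rings, to a coefficient field of $A$; a system of parameters lifts freely; for any coefficient field $\phi$ and any $d$-tuple $y_1,\dots,y_d$ that is a system of parameters, the inclusion $\phi(k)[[y_1,\dots,y_d]]\subset A$ is automatically injective (a dimension count, $\phi(k)[[\underline y]]$ being a $d$-dimensional domain) and module-finite (the complete-ring form of Nakayama's lemma applied to the Artinian ring $A/(\underline y)A$); and both the set of minimal primes $P$ with $\dim A/P=d$ and the rings $A/P$ for such $P$ are unchanged on replacing $A$ by $A/\sqrt{0}$. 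So I may assume $A$ reduced and $d\ge 1$ (for $d=0$ one has $\Frac(A/P)=A/P=k$ and there is nothing to prove), and the whole content of the theorem becomes: \emph{produce a single coefficient field $\phi\colon k\to A$ and a single system of parameters $y_1,\dots,y_d$ such that $\Frac(\phi(k)[[\underline y]])\subset\Frac(A/P_i)$ is separable for each of the finitely many minimal primes $P_1,\dots,P_s$ with $\dim A/P_i=d$.} Set $L_i=\Frac(A/P_i)$.

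Next I would turn separability into a $p$-independence statement. Fix a $p$-basis $\Gamma$ of $k$; a coefficient field $\phi$ is then the same datum as a choice of lift $\phi(\gamma)\in A$ of each $\gamma\in\Gamma$. Write $R=\phi(k)[[\underline y]]$, $M_i=\Frac(R)$. The set $\{\phi(\gamma)\}_{\gamma\in\Gamma}\cup\{y_1,\dots,y_d\}$ is a $p$-basis of $M_i$, and a short computation comparing $[L_i:M_i^p]$ in two ways gives $[L_i:L_i^p]=[M_i:M_i^p]$. Since a finite field extension in characteristic $p$ is separable exactly when a $p$-basis of the base field stays $p$-independent in the top field (this is what makes $L_i\otimes_{M_i}M_i^{1/p}$ reduced), separability of $L_i/M_i$ is equivalent to: for every finite $\Gamma'\subseteq\Gamma$, the differentials $d\phi(\gamma)$ $(\gamma\in\Gamma')$ together with $dy_1,\dots,dy_d$ are $L_i$-linearly independent in $\Omega_{L_i/\mathbb F_p}$ — or, compactly, that the finitely generated $(A/P_i)$-module $\Omega_{(A/P_i)/R}$ is torsion. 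This must hold for all $i$ at once.

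The heart is then a general-position argument. Fix generators $u_1,\dots,u_n$ of $\fm$; any lift $\phi(\gamma)$ and any parameter $y_l$ may be perturbed by an arbitrary $A$-linear combination of the $u_j$, so for a fixed finite $\Gamma'\subseteq\Gamma$ the relevant data vary over an affine space $\mathbb A^N_A$ with $N$ finite, inside which the conditions above — $L_i$-independence in $\Omega_{L_i/\mathbb F_p}$ of the $d\phi(\gamma)$ $(\gamma\in\Gamma')$ and the $dy_l$, together with "$y_1,\dots,y_d$ a system of parameters of $A$" — cut out proper Zariski-closed subschemes (nonvanishing of a maximal minor, respectively no drop of dimension). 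Because the rings $A/P_i$ attached to the \emph{top} minimal primes are infinite domains, these closed subschemes are proper and their finite union (over all $i$ and the finitely many $\gamma\in\Gamma'$) is still proper, so some $A$-point of $\mathbb A^N_A$ avoids it; exhausting $\Gamma$ by finite subsets and passing to the limit (or reducing beforehand to $[k:k^p]<\infty$) yields a single $\phi$ and $\underline y$ that work for all $\gamma\in\Gamma$ and all $i$, which is the theorem.

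The step I expect to be the main obstacle is exactly this general-position argument, made uniform over all the top minimal primes simultaneously and carried out inside $A$ itself (not in a normalization or a finite extension): one has to verify that the bad choices of the lifts $\phi(\gamma)$ and of the parameters $y_l$ genuinely lie in proper closed loci — here the structure of the complete ring $A$ and the behaviour of $\Omega$ under completion and localization are what get used, and one must not assume that $\{d\phi(\gamma)\}_{\gamma\in\Gamma}$ is independent in $\Omega_{L_i/\mathbb F_p}$ for free (it need not be, e.g.\ when some $\gamma$ has a $p$-th root in $L_i$), so the coefficient field really has to be chosen and not only the parameters. The remaining subtleties — that $\Gamma$ may be infinite and that $\Omega_{L_i/\mathbb F_p}$ is not finitely generated — are dealt with by isolating the finitely many differentials that actually occur in any given relation; and one checks that the perturbations keep the "coefficient field" and "system of parameters" properties, which is harmless since these are open conditions. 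The rest — the reductions, the $p$-independence reformulation, module-finiteness — is routine.
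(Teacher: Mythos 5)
Your preliminary reductions are sound: passing to the reduced ring, noting that injectivity and module-finiteness of $\phi(k)[[y_1,\dots,y_d]]\subset A$ are automatic for any coefficient field and system of parameters, and translating separability of $\Frac(A/P_i)$ over $\Frac(\phi(k)[[\underline y]])$ into $p$-independence of $\{\phi(\gamma)\}_{\gamma\in\Gamma'}\cup\{y_1,\dots,y_d\}$ in $L_i=\Frac(A/P_i)$ (equivalently, torsionness of $\Omega_{(A/P_i)/R}$). But the proof stops exactly where the theorem begins. The assertion that the bad choices of lifts and parameters ``cut out proper Zariski-closed subschemes'' is not a routine verification: properness of that locus means precisely that \emph{some} perturbation achieves $p$-independence for the given $P_i$, which is the whole content of Cohen--Gabber beyond Cohen's structure theorem. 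You acknowledge this (``the step I expect to be the main obstacle''), but you give no argument for it, and in particular you never use reducedness of $A$, which is the hypothesis that must intervene (in the paper it appears as: if every coefficient of $f_s$ lay in $k^p$ and every partial vanished, $f_s$ would be a $p$-th power, contradicting reducedness). A nonvanishing-of-a-minor argument needs the minor to be a nonzero polynomial in the perturbation parameters, and establishing that is the theorem, not a consequence of $A/P_i$ being an infinite domain.

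There is also a concrete reason to doubt that your particular perturbation family is rich enough for a purely linear general-position argument. Perturbing $\phi(\gamma)$ and $y_l$ by $A$-linear combinations of generators $u_j$ of $\fm$ moves the corresponding differentials inside (roughly) the $A/P_i$-span of the $d\bar u_j$ in $\Omega_{L_i/\mathbb F_p}$, and this span is in general a \emph{proper} subspace: already for $A=k[[X]]$ with $k$ imperfect one has $L^p(\bar X)=k^p((X))\subsetneq L$, so $dX$ does not generate $\Omega_{L/\mathbb F_p}$. Hence one cannot reach a generic configuration by translating within that span, and the actual proof must do something genuinely nonlinear: the paper's argument (after reducing by induction on the number of generators of $\fm$ to the hypersurface case $A=k[[X_1,\dots,X_{d+1}]]/(f_1\cdots f_r)$) uses the substitution $X_j\mapsto X_j-X_1^{qp+1}$ when some $\partial f_s/\partial X_j\ne0$, and, when all partials of $f_s$ vanish, changes the coefficient field itself by sending a $p$-basis element $\alpha$ to $\alpha+\delta X_1$ and choosing $\delta$ so that finitely many polynomial conditions hold ($k$ being infinite in that case). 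Your outline correctly senses that both the parameters and the coefficient field must move, but it supplies neither the mechanism nor the case analysis; likewise the reduction to $[k:k^p]<\infty$ and the passage from finite $\Gamma'$ to all of $\Gamma$ are only gestured at (the paper avoids both by modifying a single $p$-basis element and tracking one monomial coefficient of each $f_i$). As it stands the proposal is a plausible strategy with the central step missing, not a proof.
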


The above theorem is seen as a strengthened version of Cohen structure theorem in that the module-finite extension $\phi(k)[[y_1,\ldots,y_d]] \subset A$ can be made to be generically \'etale when the local ring $R$ is reduced and equi-dimensional. Theorem \ref{CohenGabber} is formulated and proved by Gabber in \cite[Th\'eor\`eme 7.1]{GaOr} and \cite[Expos\'e IV, Th\'eor\`eme 2.1.1]{Book}. It plays a role in the proof of Gabber's alteration theorem with applications to \'etale cohomology. It is also essential in the proof of the Bertini-type theorem for reduced hyperplane quotients of complete local rings of characteristic $p>0$. This result is proved in \cite{OchShim}. Finally, we mention that there is a version of Theorem \ref{CohenGabber} for an affine domain over a perfect field (see \cite[Theorem 4.2.2]{SwHu}).

\section{Preliminaries}

We collect some facts that we shall use in this paper. All rings are assumed to be commutative and Noetherian with unity. A local ring is a Noetherian ring with a unique maximal ideal and it is denoted by the symbol $(A,\fm,k)$. We denote by $\Frac(A)$ the total ring of fractions of a commutative ring $A$.

\begin{definition}
\begin{enumerate}
\item
A \textit{coefficient field} of a complete local ring $(A,\fm,k)$ is a ring map $\phi:k \to A$ such that $\pi \circ \phi=\id_k$, where $\pi:A \to k=A/\fm$ is the quotient map. In particular, if a complete local ring has a coefficient field, this local ring contains the field of rationals $\mathbb{Q}$ or the finite field $\mathbb{F}_p$ for some 
prime number $p$.

\item
Let $K/k$ be a field extension such that the characteristic of $k$ is $p$ and let $\{\alpha_i\}_{i \in \Lambda}$ be a set of elements of $K$. Then say that $\{\alpha_i\}_{i \in \Lambda}$ is a \textit{p-basis} of $K$ over $k$, if $\{d\alpha_i\}_{i \in \Lambda}$ form a basis of the $K$-vector space $\Omega_{K/k}$, where $\Omega_{K/k}$ is the module of differentials of $K$ over $k$. Note that $K$ is a perfect field if $\Omega_{K/\mathbb{F}_p}=0$.
\end{enumerate}
\end{definition}

We recall a fundamental theorem on the existence of a coefficient field. For the proof, see \cite[Chapitre IX, \S~3, $\mathrm{n^o}$ 3, Th\'eor\`eme 1 b]{Bour}.

\begin{theorem}[Cohen]
\label{cohen}
Let $(A,\fm,k)$ be a complete local ring of equal characteristic $p>0$. Let $\{\alpha_i\}_{i \in \Lambda}$ be a set of elements of $A$ and let $\{\overline{\alpha_i}\}_{i \in \Lambda}$ be its image in $A/\fm=k$. If $\{\overline{\alpha_i}\}_{i \in \Lambda}$ is a $p$-basis of $k$ over $\mathbb{F}_p$, then there exists the unique coefficient field $\phi:k \to A$ such that $\phi(\overline{\alpha_i})=\alpha_i$ for each $i \in \Lambda$. Moreover, if $k$ is a perfect field, then $A$ has the unique coefficient field.
\end{theorem}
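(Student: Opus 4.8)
The plan is to build $\phi$ as an inverse limit of compatible coefficient homomorphisms modulo powers of $\fm$, using completeness $A=\varprojlim_n A/\fm^n$. Set $\phi_1=\id\colon k\to A/\fm=k$. Suppose inductively that a ring homomorphism $\phi_n\colon k\to A/\fm^n$ has been constructed with $\phi_n(\overline{\alpha_i})=\alpha_i \bmod \fm^n$ and with $A/\fm^n\to A/\fm=k$ composing back to the identity. I would lift $\phi_n$ one step along $A/\fm^{n+1}\to A/\fm^n$. Its kernel $I_n=\fm^n/\fm^{n+1}$ satisfies $I_n^2=0$ and $\fm\cdot I_n=0$, so $I_n$ is a $k$-vector space and $A/\fm^{n+1}\to A/\fm^n$ is a square-zero extension of $\mathbb{F}_p$-algebras. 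The whole problem thus reduces to lifting a section of a square-zero extension, compatibly with the prescribed values on the $\alpha_i$.

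For the lifting step I would use the standard torsor description. If a ring-homomorphism lift $\widetilde\phi\colon k\to A/\fm^{n+1}$ of $\phi_n$ exists at all, then any two such lifts differ by an $\mathbb{F}_p$-derivation $k\to I_n$: in a square-zero extension the difference of two lifts is additive and satisfies the Leibniz rule, since the cross term lands in $I_n^2=0$. Hence the set of lifts is a torsor under $\operatorname{Der}_{\mathbb{F}_p}(k,I_n)=\Hom_k(\Omega_{k/\mathbb{F}_p},I_n)$. This is where the $p$-basis hypothesis enters: because $\{d\overline{\alpha_i}\}_{i\in\Lambda}$ is a $k$-basis of $\Omega_{k/\mathbb{F}_p}$, a derivation is freely and uniquely prescribed by its values on the $\overline{\alpha_i}$. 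Starting from any lift $\widetilde\phi$, the discrepancy $(\alpha_i \bmod \fm^{n+1})-\widetilde\phi(\overline{\alpha_i})$ lies in $I_n$ for each $i$, so there is a unique derivation correcting $\widetilde\phi$ to a lift $\phi_{n+1}$ with $\phi_{n+1}(\overline{\alpha_i})=\alpha_i \bmod \fm^{n+1}$. This $\phi_{n+1}$ is the unique normalized lift, the system $(\phi_n)_n$ is compatible by construction, and $\phi=\varprojlim_n\phi_n\colon k\to A$ is then a coefficient field with $\phi(\overline{\alpha_i})=\alpha_i$. Uniqueness of $\phi$ follows from the step-by-step uniqueness, and when $k$ is perfect one has $\Omega_{k/\mathbb{F}_p}=0$, so the lift at each stage is unique with no normalization needed, giving the unique coefficient field.

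The main obstacle is the existence (nonemptiness of the torsor) at each square-zero step, i.e.\ formal smoothness of $k$ over $\mathbb{F}_p$; the derivation argument controls lifts only once one exists. The genuine difficulty is that the $p$-power relations in $k$ must be lifted: adjoining a $p$-th root $t$ with $t^p=s$ forces one to produce in $A/\fm^{n+1}$ a lift of $s$ that is again a $p$-th power, which is not automatic element by element. I would settle this either by invoking that $k$, being separable over the perfect field $\mathbb{F}_p$, is formally smooth, or by the following concrete packaging, which also re-proves uniqueness transparently. Consider the descending chain of subrings $R_n=A^{p^n}[\{\alpha_i\}]$ and set $K=\bigcap_{n\ge 0}R_n$. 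Writing a general element of $R_n$ as $\sum_{0\le\nu<p^n}a_\nu^{p^n}\alpha^\nu$ and using the $k^{p^n}$-linear independence of the monomials $\{\overline{\alpha}^{\,\nu}\}$ (the monomial form of the $p$-basis condition), one checks $\fm\cap R_n\subseteq\fm^{p^n}$, whence $\fm\cap K\subseteq\bigcap_n\fm^{p^n}=0$ by Krull's intersection theorem, so $\pi|_K$ is injective. Any coefficient field $K'$ containing the $\alpha_i$ satisfies $K'=(K')^{p^n}[\{\alpha_i\}]\subseteq R_n$ for every $n$, hence $K'\subseteq K$; combined with $\pi(K')=k$ and the injectivity of $\pi|_K$ this forces $K=K'$. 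This identifies $K=\phi(k)$ and yields uniqueness directly, while the surjectivity of $\pi|_K$ is again the existence statement above and remains the crux.
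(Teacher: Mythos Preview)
The paper does not prove this theorem at all: it is stated as a preliminary result with a bare citation to Bourbaki, \emph{Alg\`ebre commutative}, Chapitre~IX, \S~3, so there is no in-paper argument to compare against. What you have written is essentially the classical proof one finds in Bourbaki or in Matsumura (\emph{Commutative Ring Theory}, Theorem~28.3): lift step by step through the square-zero extensions $A/\fm^{n+1}\to A/\fm^n$, use that the set of lifts is a torsor under $\Hom_k(\Omega_{k/\mathbb{F}_p},\fm^n/\fm^{n+1})$, and normalize uniquely using the $p$-basis. Your identification of the one nontrivial input---nonemptiness of the torsor, i.e.\ formal smoothness of $k$ over the perfect field $\mathbb{F}_p$---is exactly right, and citing it is legitimate at this level.

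Your second packaging via $K=\bigcap_n R_n$ with $R_n=A^{p^n}[\{\alpha_i\}]$ is also the standard route, and the surjectivity you flag as ``the crux'' can be closed directly rather than reduced back to formal smoothness. Given $\bar x\in k$, for each $n$ write $\bar x=\sum_{0\le\nu<p^n}\bar a_{\nu}^{\,p^n}\bar\alpha^{\nu}$, pick arbitrary lifts $a_\nu\in A$, and set $x_n=\sum a_\nu^{p^n}\alpha^\nu\in R_n$. Then $x_{n+1}-x_n\in\fm\cap R_n\subseteq\fm^{p^n}$ by the very inclusion you proved, so $(x_n)$ is Cauchy; let $x=\lim x_n$ and define $\phi(\bar x)=x$. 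One checks this is a ring homomorphism (two choices at level $n$ differ by an element of $\fm^{p^n}$, and sums/products are respected modulo $\fm^{p^n}$), whence $\phi(k)$ is a subfield containing every $\alpha_i$, so $\phi(k)=\phi(k)^{p^n}[\{\alpha_i\}]\subseteq A^{p^n}[\{\alpha_i\}]=R_n$ for all $n$ and thus $\phi(k)\subseteq K$; combined with your injectivity of $\pi|_K$ this gives $\phi(k)=K$. With that addition your sketch is complete and self-contained.
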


We recall Weierstrass Preparation Theorem. An element $f \in A[[X]]$ over a local ring $(A,\fm, k)$ is called a \textit{distinguished polynomial of degree n}, if we can write $f =X^n+a_{n-1}X^{n-1}+\cdots+a_1X+a_0$ for some integer $n \ge 0$ and $a_0,\ldots,a_{n-1} \in \fm$.

\begin{theorem}[Weierstrass Preparation Theorem] 
\label{Weierstrass}
Let $(A,\fm, k)$ be a complete local ring and let $B = A[[X]]$. Let $f=\sum_{i=0}^{\infty}a_i X^i \in B$ be a non-zero element with $a_i \in A$. If there exists a natural number $n \in \mathbb{N}$ such that $a_i \in \fm$ for all $i < n$ and $a_n \notin \fm$, then we have $f=u \cdot f_0$, where $u$ is a unit in $B$ and $f_0 \in B$ is a distinguished polynomial of degree $n$. Furthermore, $u$ and $f_0$ are uniquely determined by $f$.
\end{theorem}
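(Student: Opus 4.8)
The plan is to deduce the Preparation Theorem from the Weierstrass Division Theorem, which I regard as the technical heart of the matter: for the given $f$ (note that $a_n \notin \fm$ means $a_n$ is a unit of $A$, while $a_i \in \fm$ for $i<n$), every $g \in B$ should admit a unique expression $g = qf + r$ with $q \in B$ and $r \in A[X]$ of degree $<n$. To set this up I would split elements by degree: writing $\tau(h)=\sum_{i<n} c_i X^i$ and $\rho(h)=\sum_{j\ge 0} c_{j+n}X^j$ for $h=\sum_i c_iX^i$, so that $h=\tau(h)+X^n\rho(h)$, I observe that $f=\tau(f)+X^n v$, where $v:=\rho(f)$ has constant term $a_n$ and is therefore a unit of $B$, while $\tau(f)$ has all of its coefficients in $\fm$.

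For existence of the division I would solve for $q$ by successive approximation. The requirement $\rho(g-qf)=0$ is equivalent, using $\rho(X^n q v)=qv$, to the fixed-point equation $q=v^{-1}\rho(g)-v^{-1}\rho\big(q\,\tau(f)\big)$. Since $\tau(f)\in\fm B$, the operator $q\mapsto v^{-1}\rho\big(q\,\tau(f)\big)$ carries $\fm^m B$ into $\fm^{m+1}B$; hence the iterates $q_0=v^{-1}\rho(g)$ and $q_{m+1}=v^{-1}\rho(g)-v^{-1}\rho\big(q_m\,\tau(f)\big)$ satisfy $q_{m+1}-q_m\in\fm^{m+1}B$. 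Read coefficient by coefficient, this is a Cauchy sequence in the $\fm$-adic topology of $A$, so completeness of $A$ yields a limit $q\in B$, and $r:=g-qf$ is then a polynomial of degree $<n$.

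For uniqueness I would establish the cancellation statement: if $wf=s$ with $w\in B$ and $s\in A[X]$ of degree $<n$, then $w=s=0$. Reducing modulo $\fm$ gives $\bar w\,X^n\bar v=\bar s$ in $k[[X]]$ with $\deg\bar s<n$, and comparing orders in the domain $k[[X]]$ (where $\bar f$ has order exactly $n$) forces $\bar s=0$ and $\bar w=0$, so $w\in\fm B$. Feeding $w\in\fm^tB$ back into $X^n w v=s-w\,\tau(f)$ and reading off the coefficients of degree $\ge n$, where $s$ contributes nothing and $w\,\tau(f)$ lies in $\fm^{t+1}B$, shows $wv\in\fm^{t+1}B$, hence $w\in\fm^{t+1}B$; by induction $w\in\bigcap_t\fm^tB=0$, the last equality because $\fm^tB$ is exactly the set of power series with coefficients in $\fm^t$ and $\bigcap_t\fm^t=0$ in the complete local ring $A$.

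Finally I would prove the theorem by dividing $X^n$ by $f$: write $X^n=qf+r$ with $\deg r<n$ and set $f_0:=X^n-r$, so that $qf=f_0$. Reducing modulo $\fm$ and using that $\bar f$ has order exactly $n$ shows $\bar r=0$ and $\bar q\bar v=1$; thus $f_0$ is a distinguished polynomial of degree $n$, while $q$ has unit constant term and is therefore a unit of $B$. Setting $u:=q^{-1}$ gives $f=u f_0$. Uniqueness of the pair $(u,f_0)$ follows from the cancellation statement: if $u f_0=u' f_0'$ with both factors of the required form, then $w:=u'^{-1}u$ satisfies $\bar w=1$, and $(w-1)f_0$ is a polynomial of degree $<n$, forcing $w=1$ and hence $u=u'$, $f_0=f_0'$. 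The main obstacle is the existence half of the division step: one must verify that the successive-approximation scheme converges in the correct topology on $B=A[[X]]$, tracking $\fm$-adic convergence coefficient by coefficient and invoking completeness of $A$ rather than any naive norm on $B$.
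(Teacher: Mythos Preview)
Your argument is correct: deducing the Preparation Theorem from the Division Theorem via the fixed-point iteration $q\mapsto v^{-1}\rho(g)-v^{-1}\rho(q\,\tau(f))$ is the standard route, and your handling of convergence (coefficientwise $\fm$-adic Cauchy sequences, using that $A$ is Noetherian so that $\fm^tB$ is exactly the set of series with coefficients in $\fm^t$, together with $\bigcap_t\fm^t=0$) is sound. The uniqueness step is also fine once you observe, as you implicitly do, that the cancellation statement applies equally well with $f_0$ in place of $f$, since $f_0$ satisfies the same order-$n$ hypothesis.

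There is nothing to compare against in the paper itself: the authors do not prove Theorem~\ref{Weierstrass} but simply refer the reader to Gersten's short note \cite{Ger}. For what it is worth, Gersten's proof is in the same spirit as yours---an $\fm$-adic successive approximation---so your write-up would serve perfectly well as a self-contained substitute for the citation.
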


See \cite{Ger} for a short proof of this theorem.

\begin{remark}
\label{preparation}
\begin{enumerate}
\item
Let $(A,\fm,k)$ be a $d$-dimensional complete local ring, let $\phi:k \to A$ be a coefficient field and let $x_1,\ldots,x_d \in \fm$. Then there is a natural injective ring map
$$
f:\phi(k)[[x_1,\ldots,x_d]] \subset A.
$$
Here, $\phi(k)[[x_1,\ldots,x_d]]$ is the image of the map 
 $\phi(k)[[X_1,\ldots,X_d]] \rightarrow A$ defined by $X_i \mapsto x_i$
for $i = 1, \ldots, d$, where $\phi(k)[[X_1,\ldots,X_d]]$ is the formal power series
ring over $\phi(k)$ with variables $X_1,\ldots,X_d$. The map $f$ is module-finite and $\phi(k)[[x_1,\ldots,x_d]]$ is isomorphic to a formal power series ring with $d$ variables
if and only if $x_1,\ldots,x_d$ is a system of parameters of $A$.

Suppopse that $x_1,\ldots,x_d, x_{d+1},\ldots,x_{d+h}$ generate $\fm$ such that $x_1,\ldots,x_d$ is a system of parameters of $A$. Then there is a module-finite ring map
$$
\phi(k)[[x_1,\ldots,x_d]] \subset A=\phi(k)[[x_1,\ldots,x_d, x_{d+1},\ldots,x_{d+h}]].
$$
Since $A/(x_1,\ldots,x_d)$
is a finite dimensional $\phi(k)$-vector space spanned by monomials on
$x_{d+1},\ldots,x_{d+h}$, $A$ is a finitely generated $\phi(k)[[x_1,\ldots,x_d]]$-module
also spanned by monomials on $x_{d+1},\ldots,x_{d+h}$ (cf. \cite[Theorem 8.4]{M}), i.e., 
$$
A = \phi(k)[[x_1,\ldots,x_d,x_{d+1},\ldots,x_{d+h}]]=\phi(k)[[x_1,\ldots,x_d]][x_{d+1},\ldots,x_{d+h}].
$$

\item
Let $R:=k[[X_1,\ldots,X_r]]$ be a formal power series ring over a field $k$ and choose $f \ne 0 \in R$. Write
$$
f=\sum_{i=0}^{\infty} b_i X_r^i
$$
with $b_i \in k[[X_1,\ldots,X_{r-1}]]$. Let $\fn$ be the maximal ideal of $k[[X_1,\ldots,X_{r-1}]]$ and assume that $b_0,\ldots,b_{\ell-1} \in \fn$ and $b_{\ell} \notin \fn$ for some $\ell>0$. By Theorem \ref{Weierstrass}, there is a unit $u \in R^{\times}$ together with a distinguished polynomial $g = X_r^{\ell}+a_{\ell-1}X_r^{\ell-1}+\cdots+a_{0}$ with $a_{0}, \ldots, a_{\ell-1} \in \fn$ such that
$$
f=u \cdot g.
$$
The ring injection $k[[X_1,\ldots,X_{r-1}]] \subset k[[X_1,\ldots,X_r]]$ induces an injection 
$$
S:=k[[X_1,\ldots,X_{r-1}]] \to R/(f)=:A .
$$
Here, $A$ is the $S$-free module with free basis $1$, $X_{r}$, \ldots, $X_{r}^{\ell-1}$. In particular, the ideal $(f)=(g)$ of $R$ does not contain any non-zero polynomial in $S[X_{r}]$ of degree strictly less than $\ell$.

We give an example to illustrate the situation. Let $A:=\mathbb{F}_p[[X,Y]]/(f)$ with $f=(X^p+tY^p)(X+1)$, where $t$ is transcendental over $\mathbb{F}_p$ and $X,Y$ are variables.

Then we have
$$ 
\frac{\partial f}{\partial X}=X^p+tY^p=0~\mbox{in}~A.
$$
We note that $f$ is a not a distinguished polynomial with respect to $X$.
\end{enumerate}
\end{remark}

\begin{remark}
\label{lemma}
Let $(A,\fm,k)$ be a $d$-dimensional local ring
such that $\fm$ is minimally generated by $d+h$ elements.
By the prime avoidance theorem, we can find $x_{1},\ldots,x_{d+h} \in \fm$ such that
\begin{itemize}
\item
$\fm=(x_{1},\ldots, x_{d+h})$, and
\item
any $d$ elements in $\{ x_{1},\ldots,x_{d+h} \}$ form a 
system of parameters of $A$.
\end{itemize}
\end{remark}

\section{Proof of Cohen-Gabber theorem}

We shall prove Theorem \ref{CohenGabber} in this section.

Let $(A,\fm,k)$ be a $d$-dimensional complete local ring containing a field of characteristic $p > 0$. Let $P_1, \ldots, P_r$ be the set of minimal prime ideals of $A$ of coheight $d$. If Theorem \ref{CohenGabber} is proved for $\tilde{A}:=A/P_1\cap \cdots \cap P_r$, then the same is true for $A$. \syusei{Indeed, $\tilde{A}$ is an equi-dimensional reduced local ring of dimension $d$. Then, if we can find a required coefficient field together with a system of parameters for $\tilde{A}$, we can lift them to $A$ by Theorem \ref{cohen}. Therefore, we may assume that}
\begin{equation}
\label{assumption}
\begin{tabular}{l}
\mbox{$(A,\fm,k)$ is a $d$-dimensional reduced equi-dimensional complete local ring}
\\
\mbox{containing a field of characteristic $p > 0$.}
\end{tabular}
\end{equation}

First we give a proof of the hardest case of Cohen-Gabber theorem.

\begin{proposition}
\label{hypersurface}
Let $(A,\fm,k)$ be a ring as in (\ref{assumption}).
Assume that the ideal $\fm$ is generated by $d+1$ elements. Then there exists a system of parameters $y_1,\ldots,y_d$ of $A$ and a ring map $\phi:k \to A$ such that the following hold: $\pi \circ \phi=\id_k$, the natural map
$$
\phi(k)[[y_1,\ldots,y_d]] \subset A
$$
is module-finite, and $\Frac(\phi(k)[[y_1,\ldots,y_d]]) \to \Frac(A/P)$ is a separable field extension for any minimal prime $P \subset A$.
\end{proposition}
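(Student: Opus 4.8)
The plan is to reduce the statement to a single power series equation and then combine a differential (genericity) argument with the Weierstrass Preparation Theorem. First I would fix any coefficient field $\psi_{0}$ of $A$ and, by Remark~\ref{lemma} and Remark~\ref{preparation}, write $A=S/\fa$ with $S=\psi_{0}(k)[[X_{1},\dots ,X_{d+1}]]$ a complete regular local ring of dimension $d+1$. Since $A$ is reduced and equi-dimensional of dimension $d$ and $S$ is catenary, $\fa$ is radical with all minimal primes of height $1$; as $S$ is a unique factorization domain, $\fa=(f)$ with $f=f_{1}\cdots f_{r}$ a product of pairwise non-associate prime elements, and the minimal primes of $A$ are $P_{i}=(f_{i})/(f)$. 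Thus $A=S/(f)$ with $f$ squarefree — this is the ``hypersurface'' in the name.

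I would then reformulate the goal: it suffices to produce a coefficient field $\psi$ of $S$ and a regular system of parameters $y_{1},\dots ,y_{d},z$ of $S$ such that \textup{(a)} $f\notin (y_{1},\dots ,y_{d})S$ and \textup{(b)} the image of $\partial f/\partial z$ in $A$ is a non-zerodivisor, where $\partial/\partial z$ is the continuous $T$-derivation of $S=T[[z]]$ sending $z$ to $1$, with $T:=\psi(k)[[y_{1},\dots ,y_{d}]]$. Indeed, \textup{(a)} and Theorem~\ref{Weierstrass} give $f=u\cdot g$ with $u\in S^{\times}$ and $g=g(z)\in T[z]$ a distinguished polynomial; then $y_{1},\dots ,y_{d}$ is a system of parameters of $A$, the inclusion $T\hookrightarrow A=T[z]/(g)$ is module-finite, and $\phi:=\psi\bmod (f)$ is a coefficient field of $A$ with $\Frac(\phi(k)[[y_{1},\dots ,y_{d}]])=\Frac(T)$. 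Finally $\partial f/\partial z\equiv u\,\partial g/\partial z\pmod{(g)}$, so \textup{(b)} says $\partial g/\partial z$ is a non-zerodivisor on $A$; writing $g=\prod_{i}g_{i}$ with $g_{i}$ the monic (hence primitive) irreducible factor with $(g_{i})/(g)=P_{i}$, this amounts to $\partial g_{i}/\partial z\neq 0$ in $\Frac(A/P_{i})=\Frac(T)[z]/(g_{i})$ for all $i$, i.e. each $g_{i}$ is separable over $\Frac(T)$, i.e. $\Frac(T)\to\Frac(A/P_{i})$ is separable — precisely the claim.

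The key input for producing \textup{(a)}, \textup{(b)} is that $df$ does not vanish modulo $P_{i}$ in the module of continuous K\"ahler differentials $\widehat\Omega_{S/\mathbb{F}_{p}}$: localizing at $P_{i}$ gives a discrete valuation ring $S_{P_{i}}$ with uniformizer $f_{i}$ and residue field $\kappa(P_{i})=\Frac(A/P_{i})$, whose completion is $\kappa(P_{i})[[f_{i}]]$; writing $f=f_{i}h_{i}$ there with $h_{i}$ a unit (this is where squarefreeness is used), one gets $df\equiv h_{i}\,df_{i}\not\equiv 0$ modulo $f_{i}$, and $df_{i}$ spans a free rank-one summand of $\widehat\Omega_{\kappa(P_{i})[[f_{i}]]/\mathbb{F}_{p}}\otimes\kappa(P_{i})$. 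Expanding $df$ in the basis of $\widehat\Omega_{S/\mathbb{F}_{p}}$ associated to $\psi_{0}$ and $X_{1},\dots ,X_{d+1}$, this forces that at least one of the partial derivatives of $f$ — either $\partial f/\partial X_{j}$ for some $j$, or the derivative of $f$ along the lift of a $p$-basis element of $k$ — lies outside $P_{i}$. Since this holds for each of the finitely many $P_{i}$, prime avoidance shows that the ideal $J_{f}\subseteq S$ generated by all these partials is not contained in $P_{1}\cup\dots\cup P_{r}$; hence there is a continuous $\mathbb{F}_{p}$-derivation $D$ of $S$ with $D(f)$ a non-zerodivisor on $A$.

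The remaining, and I expect hardest, step is to convert this into the coordinate statement \textup{(a)}, \textup{(b)}: one cannot simply take $z$ with $D=\partial/\partial z$, since in characteristic $p$ a derivation need not be a coordinate derivation. Instead I would modify the chosen lifts of a $p$-basis of $k$ by elements of $\fm_{S}$ — this is exactly what turns a ``coefficient-field direction'' of $D$ into a genuine-variable direction, as in the example after Remark~\ref{preparation} — and simultaneously adjust $X_{1},\dots ,X_{d+1}$, so that the last coordinate $z$ acquires $\partial f/\partial z\notin\bigcup_{i}P_{i}$ while $f$ remains regular in $z$. The non-vanishing of $df$ modulo each $P_{i}$ makes each of these finitely many conditions generic, and they can be met simultaneously by choosing the perturbations successively, avoiding finitely many cosets of the $P_{i}$ inside $\fm_{S}$ (possible since each $\fm_{S}/P_{i}$ is infinite). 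Once \textup{(a)} and \textup{(b)} hold, the Weierstrass argument above finishes the proof; I expect the bookkeeping needed to handle a possibly infinite $p$-basis of $k$ and all $r$ minimal primes at once to be the most delicate point.
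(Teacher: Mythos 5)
Your reduction to the hypersurface presentation $A=S/(f)$ with $f$ squarefree, the reformulation as conditions (a), (b) on a coordinate system, and the Weierstrass argument deducing separability of each $\Frac(T)\to\Frac(A/P_i)$ from (a), (b) are all correct and match the paper's skeleton. Your route to the key non-degeneracy statement is genuinely different: you localize at each $P_i$ to get a complete DVR $\kappa(P_i)[[f_i]]$ and deduce $df\not\equiv 0$ modulo $P_i$ in the continuous differentials, hence that some partial (coordinate or coefficient-field direction) lies outside $P_i$. The paper never needs this; it only uses the elementary observation that if \emph{every} partial of a factor $f_s$ vanished then $f_s$ would be a $p$-th power, contradicting reducedness. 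Your version is closer to Gabber's original argument and yields a derivation $D$ with $D(f)$ a non-zerodivisor, at the cost of foundational facts about $\widehat\Omega_{S/\mathbb{F}_p}$ for imperfect $k$ that the paper deliberately avoids.

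The gap is in the final step, which you correctly flag as the hardest and then dispatch in a few lines: converting ``some continuous derivation $D$ has $D(f)\notin\bigcup_i P_i$'' into ``a coordinate derivation $\partial/\partial z$ has $\partial f/\partial z\notin\bigcup_i P_i$'' is where essentially all the work of this proposition lives, and your description of the bad perturbations as ``finitely many cosets of the $P_i$ inside $\fm_S$'' is neither justified nor accurate. Concretely: (i) when the direction that works for $P_s$ is a coefficient-field direction $\partial/\partial\beta_\lambda$, the perturbation is a change of coefficient field $\alpha\mapsto\alpha+\delta X_1$, and the induced automorphism $\Phi_\delta$ of $S$ acts on every coefficient of $f$ through its $p^e$-adic expansion in the $p$-basis; showing that the conditions already secured for $P_1,\dots,P_{s-1}$ survive for some $\delta$ requires proving that the relevant coefficients of $\Phi_\delta(f_i)$ are polynomial functions of $\delta$ (a full page of computation in the paper, Step 2 of Claim \ref{sublemma1}), and this is not an ``avoid a coset'' condition. (ii) Your genericity is ultimately quantified over $k$, and when $k$ is finite such an argument can fail; the paper handles the coordinate-direction case by the nonlinear substitution $X_j\mapsto X_j-X_1^{n}$ with $n\equiv 1\pmod p$, varying $n$ over an infinite set, and in the coefficient-field case it first proves $k^p\subsetneq k$, hence $k$ infinite, before choosing $\delta$. (iii) You must also preserve condition (a) and the already-achieved conditions simultaneously under each perturbation; the paper's bookkeeping via marked monomials $X_1^{c_{i,1}}\cdots X_{d+1}^{c_{i,d+1}}$ with $p\nmid c_{i,1}$, together with the sop condition of Remark \ref{lemma}, is what makes this induction close. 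As written, your proposal asserts the conclusion of this step rather than proving it.
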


\begin{proof}
We fix a coefficient field $\phi:k \to A$ together with a set of elements $x_1,\ldots,x_{d+1} \in \fm$ which satisfy the conclusion of Remark~\ref{lemma}. Then we have a module-finite injection
$$
k[[X_1,\ldots,X_d]] \to A
$$
by mapping $k$ to $\phi(k)$ and each $X_i$ to $x_i$. Since $A$ is reduced and equi-dimensional, after embedding $k[[X_1,\ldots,X_d]]$ to $R:=k[[X_1,\ldots,X_d,X_{d+1}]]$ in the natural way, we get a presentation:
$$
k[[X_1,\ldots,X_d]] \subset R \twoheadrightarrow R/(f) = A,
$$
where $f=f_1 \cdots f_r$ such that $f_i$ is irreducible for $i = 1, \ldots, r$.
Furthermore, we may assume that each $f_i$ is a distinguished polynomial with respect to $X_{d+1}$. Here, remark that, since $X_{1}, \ldots, X_{d}, f_{i}$ is a system of parameters of $R$ for $i = 1, \ldots, r$, each $f_{i}$ satisfies the assumption of Theorem~\ref{Weierstrass}. In summary,
\begin{enumerate}
\item
any $d$ elements in $\{x_1,\ldots,x_{d+1}\}$ form a system of parameters of $A$,

\item
$f=f_1 \cdots f_r$ is a factorization, where each $f_i$ is a prime element of $R$, and

\item
each $f_i$ is a distinguished polynomial with respect to $X_{d+1}$.
\end{enumerate}

We claim the following fact.

\begin{claim}
\label{sublemma1}
After replacing a coefficient field of $A$ and $x_1,\ldots,x_{d+1}$ if necessary, the following formula together with  (1), (2) and (3) above holds:
$$
\frac{\partial f_i}{\partial X_1} \ne 0 \ \mbox{in $R$}
$$
for all $i=1,\ldots,r$.
\end{claim}

Before proving this claim, let us see how Proposition \ref{hypersurface} follows from it. Since $f_i$ is a distinguished polynomial with respect to $X_{d+1}$, it follows from Claim \ref{sublemma1} that
\begin{equation}
\label{partial}
\frac{\partial f_i}{\partial X_1} \notin (f_i) \ \mbox{in $R$}
\end{equation}
since $\deg_{X_{d+1}}\frac{\partial f_i}{\partial X_1}$ is strictly less than 
$\deg_{X_{d+1}} f_{i}$ (see Remark \ref{preparation} (2)). Since $x_2,\ldots,x_{d+1}$ form a system of parameters of $A$ by (1), the composite ring map
$$
k[[X_2,\ldots,X_{d+1}]] \subset R \twoheadrightarrow R/(f) = A
\twoheadrightarrow R/(f_{i})
$$
is module-finite. Then by Remark~\ref{preparation}, we can find a unit $u_i \in R^{\times}$ such that $g_i:=f_i \cdot u_i$ is a distinguished polynomial with respect to $X_1$ for $i=1,\ldots,r$. Moreover, $g_i$ is a minimal polynomial of $x_1 \in A$ over $\Frac(k[[X_2,\ldots,X_{d+1}]])$. By Leibniz rule, we get
$$
\frac{\partial g_i}{\partial X_1}=\frac{\partial f_i}{\partial X_1}u_i+f_i \frac{\partial u_i}{\partial X_1}.
$$
Then by $(\ref{partial})$,
$$
\frac{\partial g_i}{\partial X_1} \notin (f_i)
$$
and in particular,
$$
\frac{\partial g_i}{\partial X_1} \ne 0 \ \mbox{in $R$}.
$$
Since $g_i$ is a distinguished polynomial with respect to $X_1$ satisfying the above, 
it follows that $g_i$ is a separable polynomial over $\Frac(k[[X_2,\ldots,X_{d+1}]])$. Therefore, the field extension
$$
\Frac(k[[X_2,\ldots,X_{d+1}]]) \subset \Frac(R/(f_i))=\Frac(R/(g_i))
$$
is finite separable and this proves Proposition~\ref{hypersurface}.

\begin{proof}[Proof of Claim \ref{sublemma1}]
The point is to make a good choice of a coefficient field of $A$. 
Consider the following condition for some $s \ge 1$:
\begin{equation}
\label{partial2}
\frac{\partial f_i}{\partial X_1} \ne 0~\mbox{for}~i=1,\ldots,s-1~\mbox{and}~\frac{\partial f_s}{\partial X_1}=0~\mbox{in}~R.
\end{equation}
Assume (\ref{partial2}). 
Then we shall prove that
\begin{equation}
\label{partial3}
\begin{tabular}{l}
\mbox{after replacing a coefficient field $\phi: k \to A$ and $X_1,\ldots,X_{d+1}$,}
\\
\mbox{$\displaystyle \frac{\partial f_i}{\partial X_1} \ne 0$ holds for every $i=1,\ldots,s$.}
\end{tabular}
\end{equation}

We prove (\ref{partial3}) in 2 steps below. Keep in mind that we assume (\ref{partial2}).

\begin{enumerate}
\item[$\bf{Step 1}$]
Let us assume that the following condition holds.
\begin{equation}
\label{casebycase1}
\mbox{There exists some $j \ge 2$ such that $\displaystyle \frac{\partial f_s}{\partial X_j} \ne0$
in $R$.}
\end{equation}
Write
$$
f_s=\sum_{a,b} F_{a,b} X_1^a X_j^b
$$
where
$$
\syusei{F_{a,b}:=F_{a,b}(X_2,\ldots,X_{j-1},X_{j+1},\ldots,X_{d+1})} \in k[[X_2,\ldots,X_{j-1},X_{j+1},\ldots,X_{d+1}]].
$$
By hypothesis (\ref{partial2}), if $F_{a,b} \ne 0$, then we have $p|a$. Define
$$
b_0:=\min\{b~|~p \nmid b~\mbox{and}~F_{a,b} \ne 0~\mbox{for some}~a \ge 0 \}
$$
and
$$
a_0:=\min\{a~|~F_{a,b_0} \ne 0\}.
$$
Note that $p|a_{0}$. We prove the following claim.
\begin{enumerate}\rm
\item[-]
Any choice of $d$ elements from the set $\{x_1,\ldots,x_{j-1},x_j-x_1^n,x_{j+1},\ldots,x_{d+1}\}$ forms a system of parameters of $A$ for $n \gg 0$.
\end{enumerate}
It suffices to take care of the $d$ elements $x_2,\ldots,x_{j-1},x_j-x_1^n,x_{j+1},\ldots,x_{d+1}$. 
Let $P \in \Min(A/(x_2,\ldots,x_{j-1},x_{j+1},\ldots,x_{d+1}))$. 
Then we have $x_j-x_1^n \notin P$ for $n \gg 0$. Indeed, if this is not the case, there exist $n_1$ and $n_2$ such that $n_1<n_2$ and $x_j-x_1^{n_1}, x_j-x_1^{n_2} \in P$. 
Then we would have that
$$
x_1^{n_1}(1-x_1^{n_2-n_1}) \in P~\mbox{and thus}~x_1 \in P.
$$
This is a contradiction. Hence the claim follows. By (\ref{partial2}), we may assume that the following condition
is satisfied.
\begin{equation}
\label{dist}
\mbox{For}~i=1,\ldots,s-1,~\mbox{the coefficient of}~X_1^{c_{i,1}} \cdots X_{d+1}^{c_{i,d+1}}~\mbox{in}~f_i~\mbox{is not zero and}~p \nmid c_{i,1}. 
\end{equation}
For the sequence $c_{1,1}, c_{2,1},\ldots,c_{s-1,1}$ as above, let us make a choice of an integer $q>0$ such that the following condition holds.

\begin{enumerate}\rm
\item[-]
Let $n:=qp+1$. Furthermore, $n$ is strictly greater than any element in the set $\{a_0,c_{1,1},c_{2,1},\ldots,c_{s-1,1}\}$, and any choice of $d$ elements from the set $\{x_1,\ldots,x_{j-1},x_j-x_1^n,x_{j+1},\ldots,x_{d+1}\}$ forms a system of parameters of $A$.
\end{enumerate}
We put
$$
Y_t:=X_t~\mbox{for}~t=1,\ldots,j-1,j+1,\ldots,d+1~\mbox{and}~Y_j:=X_j-X_1^n.
$$
Then
$$
g_i(Y_1,\ldots,Y_{d+1}):=f_i(Y_1,\ldots,Y_{j-1},Y_j+Y_1^n,Y_{j+1},\ldots,Y_{d+1}) \in R = k[[Y_1,\ldots,Y_{d+1}]]
$$
is still a distinguished polynomial with respect to $Y_{d+1}$. Let us look at the partial derivative of $g_i$ with respect to $Y_1$ for $i=1,\ldots,s-1$. Note that
$$
g_i(Y_1,\ldots,Y_{d+1})=Y_1^n \cdot h(Y_1,\ldots,Y_{d+1})+f_i(Y_1,\ldots,Y_{d+1})
$$
for some $h(Y_1,\ldots,Y_{d+1}) \in R = k[[Y_1,\ldots,Y_{d+1}]]$. Since the coefficient of $Y_1^{c_{i,1}} Y_2^{c_{i,2}} \cdots Y_{d+1}^{c_{i,d+1}}$ in $f_i(Y_1,\ldots,Y_{d+1})$ is not zero and $c_{i,1}<n$,
the coefficient of $Y_1^{c_{i,1}} Y_2^{c_{i,2}} \cdots Y_{d+1}^{c_{i,d+1}}$ in $g_i(Y_1,\ldots,Y_{d+1})$ turns out to be not zero.
Since $p \nmid c_{i,1}$ by $(\ref{dist})$, it follows that
\begin{equation}
\label{dist2}
\mbox{$\displaystyle \frac{\partial g_i}{\partial Y_1} \ne 0$ in $R$ for $i=1,\ldots,s-1$.}
\end{equation}
Next, define $G_{a,b}(Y_2,\ldots,Y_{j-1},Y_{j+1},\ldots,Y_{d+1}) \in k[[Y_2,\ldots,Y_{j-1},Y_{j+1},\ldots,Y_{d+1}]]$ by the following equation:
\begin{eqnarray*}
g_s(Y_1,\ldots,Y_{d+1}) & = & \sum_{a,b} F_{a,b}(Y_2,\ldots,Y_{j-1},Y_{j+1},\ldots,Y_{d+1})Y_1^a(Y_j+Y_1^n)^b \\
& = & \sum_{a,b}G_{a,b}(Y_2,\ldots,Y_{j-1},Y_{j+1},\ldots,Y_{d+1})Y_1^a Y_j^b.
\end{eqnarray*}
Then we claim that $G_{a_0+n,b_0-1} \ne 0$ by the choice of $n$. Indeed, if $p \mid b$,  $F_{a,b} Y_1^a(Y_j+Y_1^n)^b$ does not
contribute to $G_{a_0+n,b_0-1} Y_1^{a_0+n} Y_j^{b_0-1}$.
Since $n > a_0$, $F_{a,b} Y_1^a(Y_j+Y_1^n)^b$ does not
contribute to $G_{a_0+n,b_0-1} Y_1^{a_0+n} Y_j^{b_0-1}$ if $b > b_0$.
Therefore, only $F_{a_0,b_0} Y_1^{a_0}(Y_j+Y_1^n)^{b_0}$ contributes to
$G_{a_0+n,b_0-1} Y_1^{a_0+n} Y_j^{b_0-1}$.
We have $G_{a_0+n,b_0-1} =\binom{b_0}{1}F_{a_0,b_0}=b_0 F_{a_0,b_0} \neq 0$.

Since $p \nmid (a_0+n)$, it follows that
\begin{equation}
\label{dist3}
\frac{\partial g_s}{\partial Y_1} \ne 0 \ \mbox{in $R$}.
\end{equation}
Combining $(\ref{dist2})$ and $(\ref{dist3})$ together, we complete the proof of $(\ref{partial3})$ under the condition $(\ref{casebycase1})$. 

Now we move on to $\bf{Step 2}$.

\item[$\bf{Step 2}$]
Next, let us assume that the following condition holds.
\begin{equation}
\label{casebycase2}
\frac{\partial f_s}{\partial X_j}=0~\mbox{for all}~j=1,\ldots,d+1.
\end{equation}
In this case, the coefficient of some monomial on $X_1,\ldots,X_{d+1}$
in $f_{s}$ does not belong to $k^p$. If not, $f_s$ must be a $p$-th power of some element of $R = k[[X_1,\ldots,X_{d+1}]]$. In this case $A=R/(f)$ is not reduced, which contradicts to our hypothesis. Thus, we have $k^p \subsetneq k$ and in particular,
\begin{equation}
\label{dist4}
k~\mbox{is an infinite field}.
\end{equation}
Consider the set
\[
T=\{ (\ell_{1}, \ldots, \ell_{d+1}) \mid 
\mbox{the coefficient of $X_{1}^{\ell_1}X_2^{\ell_2} \cdots X_{d+1}^{\ell_{d+1}}$
in $f_{s}$ is not in $k^{p}$} \} .
\]
Let $(\ell'_{1}, \ldots, \ell'_{d+1})$ be an element of $T$ such that
$\ell'_{1}+ \cdots + \ell'_{d+1}$ is the minimum element 
in 
\[
\{ \ell_{1}+ \cdots + \ell_{d+1} \mid (\ell_{1}, \ldots, \ell_{d+1}) \in T \} .
\]
Note that
\begin{equation}
\label{lex}
\mbox{each of}~\ell'_1,\ldots,\ell'_{d+1}~\mbox{is divisible by}~p.
\end{equation}
Let $\alpha$ be the coefficient of $X_{1}^{\ell'_1}X_2^{\ell'_2} \cdots X_{d+1}^{\ell'_{d+1}}$
in $f_{s}$, and take a $p$-basis of $k/\mathbb{F}_p$:
\begin{equation}
\label{coefficient}
\{\alpha\} \cup \{\beta_{\lambda}\}_{\lambda \in \Lambda}.
\end{equation}

Let $\pi:k[[X_1]] \to k$ be the natural surjection. 
Let $\delta$ be an element in $k$.
Since $\{ \pi(\alpha+\delta X_1) \} \cup \{ \pi(\beta_{\lambda}) \}_{\lambda \in \Lambda}$ is a $p$-basis of $k/\mathbb{F}_p$, 
we have a map $\psi_{\delta}: k \to k[[X_1]]$ such that 
$\psi_{\delta}(\beta_{\lambda})=\beta_{\lambda}$ for $\lambda \in \Lambda$ and 
$\psi_{\delta}(\alpha) = \alpha+\delta X_1$ by Theorem~\ref{cohen}.
Here, $\psi_{\delta}$ naturally induces an isomorphism
$\tilde{\psi_{\delta}}:k[[X_1]] \to k[[X_1]]$ such that
$\tilde{\psi_{\delta}}(X_{1}) = X_{1}$ and $\tilde{\psi_{\delta}}|_{k} = 
\psi_{\delta}$, where $\tilde{\psi_{\delta}}|_{k}$ is the restriction of 
$\tilde{\psi_{\delta}}$ to $k$.
Let $\phi_{\delta}$ be the composite map of
\[
k \subset k[[X_{1}]] \stackrel{ {\tilde{\psi_{\delta}}}^{-1} }{\longrightarrow}
k[[X_{1}]] .
\]
Let
$$
\Phi_{\delta}: R \to R
$$
be a ring isomorphism such that $\Phi_{\delta}(X_{i}) = X_{i}$ for $i = 1, \ldots, d+1$
and $\Phi_{\delta}|_{k[[X_{1}]]} = \tilde{\psi_{\delta}}$.
We have the following commutative diagram
\[
\begin{array}{ccc}
k & & \\
\downarrow & \hphantom{\scriptstyle \phi_{\delta}} \searrow {\scriptstyle \phi_{\delta}} & \\
k[[X_{1}]] & \stackrel{\tilde{\psi_{\delta}}}{\longleftarrow} & k[[X_{1}]] \\
\downarrow & & \downarrow \\
R & \stackrel{\Phi_{\delta}}{\longleftarrow} & R
\end{array}
\]
where the vertical maps are the natural inclusions.

Considering the composite map of
\[
k \stackrel{\phi_{\delta}}{\longrightarrow} k[[X_{1}]] \subset R = k[[X_{1}, \ldots, X_{d+1}]]
\]
as a new coefficient field of $R$, we have an isomorphism
\[
R/(f_{i}) \simeq R/(\Phi_{\delta}(f_{i})) .
\]

We claim the following.
\begin{enumerate}\rm
\item[-]
For $i=1,\ldots,s-1$, one can present the coefficient of $X_1^{c_{i,1}} \cdots X_{d+1}^{c_{i,d+1}}$ in $\Phi_{\delta}(f_i)$ in the form $\xi_i(\delta)$, where $\xi_i(X) \in k[X]$ and $\xi_i(0) \ne 0$, where the sequence $c_{i,1},\ldots,c_{i,d+1}$ is given as in (\ref{dist}).
\end{enumerate}
Let us prove this claim. Let $\xi_{i}(\delta)$ be the coefficient of 
$X_1^{c_{i,1}} \cdots X_{d+1}^{c_{i,d+1}}$ in $\Phi_{\delta}(f_i)$.
Note by (\ref{dist}) that $\xi_{i}(0) \neq 0$ since $\Phi_{0}$ is the identity.
We shall prove that $\xi_{i}(\delta)$ is a polynomial function on $\delta$. Pick an element $c \in k \subseteq R$, 
where $k$ embeds into $R = k[[X_1,\ldots,X_{d+1}]]$ in the natural way. 
Then we have $\Phi_{\delta}(c)=\tilde{\psi_{\delta}}(c) \in k[[X_1]]$ and 
$\tilde{\psi_{\delta}}(c)-c$ is divisible by $X_1$. 
So we can write
\begin{equation}
\label{eta}
\Phi_{\delta}(c)=c+\sum_{i=1}^{\infty}\eta_{c,i}(\delta)X_1^i ,
\end{equation}
where $\eta_{c,i}(\delta) \in k$ for $i \ge 1$. It is sufficient to prove that each $\eta_{c,i}(\delta)$ is a polynomial with respect to $\delta$.

For a fixed integer $e >0$, note that the set
$$
\left\{ \alpha^q \cdot \prod_{\lambda \in \Lambda} \beta_{\lambda}^{q_\lambda}
\ \left| \ 
\begin{tabular}{l}
\mbox{$q,q_{\lambda}=0,1,\ldots,p^e-1$,} \\
\mbox{$q_{\lambda} = 0$ except for finitely many $\lambda \in \Lambda$}
\end{tabular}
 \right\} \right.
$$
forms a basis of the $k^{p^e}$-vector space $k$. We use the symbol $\underline{q}_{\lambda}$ to denote a vector $\{ q_{\lambda} \}_{\lambda \in \Lambda}$, where
$q_{\lambda} = 0$ except for finitely many $\lambda \in \Lambda$.
Suppose
$$
\syusei{c=\sum_{q,\underline{q}_{\lambda}=0}^{p^e-1} (d_{q,\underline{q}_{\lambda}})^{p^e} \cdot \alpha^q \cdot (\prod_{\lambda \in \Lambda} \beta_{\lambda}^{q_\lambda})}
$$
for $d_{q,\underline{q}_{\lambda}} \in k$. Applying the map $\Phi_{\delta}$, we have
$$
\Phi_{\delta}(c)=\sum_{q,\underline{q}_{\lambda}=0}^{p^e-1} \Phi_{\delta}(d_{q,\underline{q}_{\lambda}})^{p^e} \cdot (\alpha+\delta X_1)^q \cdot (\prod_{\lambda \in \Lambda} \beta_{\lambda}^{q_\lambda}),
$$
where
$$
\Phi_{\delta}(d_{q,\underline{q}_{\lambda}})^{p^e}=\big(d_{q,\underline{q}_{\lambda}}+X_1 \cdot \gamma \big)^{p^e}=(d_{q,\underline{q}_{\lambda}})^{p^e}+X^{p^e}_1 \cdot \gamma^{p^e}
$$
for some $\gamma \in R$. Letting $\eta_{c,i}(\delta)$ be as in $(\ref{eta})$, it follows that, if \syusei{$e \ge 0$ is an integer satisfying $p^e>i$}, then
$\eta_{c,i}(\delta)$ is the coefficient of $X_1^i$ in
\[
\sum_{q,\underline{q}_{\lambda}=0}^{p^e-1} (d_{q,\underline{q}_{\lambda}})^{p^e} \cdot  (\alpha+\delta X_1)^q \cdot (\prod_{\lambda \in \Lambda} \beta_{\lambda}^{q_\lambda}) .
\]
This description shows that $\eta_{c,i}(\delta)$ is a polynomial with respect to $\delta$. Therefore, $\xi_{i}(\delta)$ is also a polynomial function on $\delta$ for $i = 1, \ldots, s-1$.

Let us write
$$
\xi(x):=\xi_1(x) \cdots \xi_{s-1}(x) \in k[x].
$$
Then since $\xi(0) \ne 0$, there exists $\delta \in k^{\times}$ such that $\xi(\delta) \ne 0$ due to the fact that $k$ is an infinite field $(\ref{dist4})$. Now we are going to finish the proof of Claim~\ref{sublemma1}.

\begin{enumerate}\rm
\item[-]
For $i=1,2,\ldots,s-1$, we have
$$
\frac{\partial \Phi_{\delta}(f_i)}{\partial X_1} \ne 0,
$$
since the coefficient of $X^{c_{i,1}}_1 X^{c_{i,2}}_2 \cdots X^{c_{i,d+1}}_{d+1}$ in $\Phi_{\delta}(f_i)$ is $\xi_{i}(\delta)$, which is not $0$ by the choice of $\delta$.
\item[-]
For $i=s$, we shall prove that 
the coefficient of $X^{\ell'_{1}+1}_1 X^{\ell'_{2}}_2 \cdots X^{\ell'_{d+1}}_{d+1}$ in $\Phi_{\delta}(f_s)$  is not zero.
Put
\[
f_{s} = \sum_{\ell_{1}, \ldots, \ell_{d+1}} c_{\ell_{1}, \ldots, \ell_{d+1}}
X_{1}^{\ell_{1}} X_{2}^{\ell_{2}} \cdots X_{d+1}^{\ell_{d+1}}, 
\]
where $c_{\ell_{1}, \ldots, \ell_{d+1}} \in k$.
Then, we have
\begin{eqnarray*}
\Phi_{\delta}(f_{s}) & = & 
\sum_{\ell_{1}, \ldots, \ell_{d+1}} \Phi_{\delta}(c_{\ell_{1}, \ldots, \ell_{d+1}})
X_{1}^{\ell_{1}} X_{2}^{\ell_{2}} \cdots X_{d+1}^{\ell_{d+1}} \\
& = & 
\sum_{\ell_{1}, \ldots, \ell_{d+1}} \left( c_{\ell_{1}, \ldots, \ell_{d+1}} 
+ \sum_{i = 1}^{\infty} \eta_{c_{\ell_{1}, \ldots, \ell_{d+1}},i}(\delta)X_{1}^{i} \right)
X_{1}^{\ell_{1}} X_{2}^{\ell_{2}} \cdots X_{d+1}^{\ell_{d+1}} \\
& = & f_{s} + \sum_{\ell_{1}, \ldots, \ell_{d+1}} \sum_{i = 1}^{\infty}
\eta_{c_{\ell_{1}, \ldots, \ell_{d+1}},i}(\delta)
X_{1}^{\ell_{1}+i} X_{2}^{\ell_{2}} \cdots X_{d+1}^{\ell_{d+1}} .
\end{eqnarray*}
Therefore, the coefficient of $X_{1}^{\ell'_{1}+1} X_{2}^{\ell'_{2}} \cdots X_{d+1}^{\ell'_{d+1}}$ in $\Phi_{\delta}(f_{s})$ is
\[
\sum_{i = 1}^{\ell'_{1}+1} \eta_{c_{\ell'_{1}+1-i, \ell'_{2}, \ldots, \ell'_{d+1}},i}
(\delta), 
\]
since the coefficient of $X_{1}^{\ell'_{1}+1} X_{2}^{\ell'_{2}} \cdots X_{d+1}^{\ell'_{d+1}}$ in $f_{s}$ is zero by (\ref{casebycase2}) and (\ref{lex}).
If $\eta_{c_{\ell'_{1}+1-i, \ell'_{2}, \ldots, \ell'_{d+1}},i}
(\delta) \neq 0$, then this implies that \syusei{$c_{\ell'_{1}+1-i, \ell'_{2}, \ldots, \ell'_{d+1}} \neq 0$ and thus, $\ell'_{1}+1-i$ is divisible by $p$ by (\ref{casebycase2})}.
So, we assume that $i \equiv 1 \mod p$ by (\ref{lex}).
If $i = qp+1$ with $q>0$, then $c_{\ell'_{1}+1-i, \ell'_{2}, \ldots, \ell'_{d+1}}
\in k^{p}$ by the definition of $(\ell'_{1}, \ell'_{2}, \ldots, \ell'_{d+1})$.
\syusei{Under the notation as in $(\ref{eta})$, note that $\eta_{\gamma, i}(\delta) = 0$ if $p \nmid i$ and $\gamma \in k^{p}$, because $\Phi_{\delta}(\gamma)$ has a $p$-th root in $R$.} Therefore, we have 
$\eta_{c_{\ell'_{1}+1-i, \ell'_{2}, \ldots, \ell'_{d+1}},i}(\delta) = 0$ for any integer $i \ge 2$.
Then, the coefficient of $X_{1}^{\ell'_{1}+1} X_{2}^{\ell'_{2}} 
\cdots X_{d+1}^{\ell'_{d+1}}$ in $\Phi_{\delta}(f_{s})$ is
\[
\eta_{c_{\ell'_{1}, \ell'_{2}, \ldots, \ell'_{d+1}},1}(\delta) =
\eta_{\alpha,1}(\delta) = \delta \neq 0,
\]
\syusei{where $\alpha \in k$ is as in $(\ref{coefficient})$.} Hence, we obtain
$$
\frac{\partial \Phi_{\delta}(f_s)}{\partial X_1} \ne 0 .
$$
\end{enumerate}
We complete the proof of (\ref{partial3}) under the condition (\ref{casebycase2}).
\end{enumerate}
We have completed the proof of Claim~\ref{sublemma1}.
\end{proof}
We have completed the proof of Proposition~\ref{hypersurface}, which is the hypersurface case of Cohen-Gabber theorem.
\end{proof}

As noted in (\ref{assumption}), it suffices to prove Cohen-Gabber theorem (Theorem \ref{CohenGabber}) in the reduced equi-dimensional case.

\begin{proof}[Proof of Theorem \ref{CohenGabber}]
Let $\fm=(x_1,\ldots,x_d,x_{d+1},\ldots,x_{d+h})$ such that $x_1,\ldots,x_d$ is a system of parameters of $A$ (see Remark~\ref{lemma}).
We shall prove the reduced equi-dimensional case of Theorem~\ref{CohenGabber} by induction on $h$.

$\mathbf{h=0}$:
Since $\fm$ is generated by $d$ elements, we have $A=k[[x_1,\ldots,x_d]]$ and we are done in this case.

$\mathbf{h=1}$:
This is already established as in Proposition~\ref{hypersurface}.

$\mathbf{h \ge 2}$:
With notation as above, fix a coefficient field $\phi:k \to A$.
Then $A=\phi(k)[[x_1,\ldots,x_{d+h}]]=\phi[[x_1,\ldots,x_d]][x_{d+1},\ldots,x_{d+h}]$ by Remark~\ref{preparation} (1). We consider the following commutative diagram of complete local rings:
$$
\footnotesize{
\begin{CD}
A=\phi(k)[[x_1,\ldots,x_{d+h}]] @<<< D:=\phi'(k)[[y_1,\ldots,y_d,x_{d+2},\ldots,x_{d+h}]] @<<< E:=\phi''(k)[[z_1,\ldots,z_d]] \\
@AAA @AAA @. \\
B:=\phi(k)[[x_1,\ldots,x_{d+1}]] @<<< C:=\phi'(k)[[y_1,\ldots,y_d]] \\
@AAA \\
\phi(k)[[x_1,\ldots,x_d]] \\
\end{CD}}
$$
We explain the structure of the above diagram.

\begin{enumerate}\rm
\item[-]
Let $B$ be the subring $\phi(k)[[x_1,\ldots,x_{d+1}]]$ of $A$.
After applying the case $\mathbf{h=1}$ to $B$, we can find a coefficient field $\phi':k \to B$ together with a system of parameters $y_1,\ldots,y_d$ to get a formal power series ring $C=\phi'(k)[[y_1,\ldots,y_d]]$ such that $\Frac(C) \to \Frac(B/P)$ is a separable field
extension
for any minimal prime ideal $P$ of $B$. 
Let $D$ be the subring $\phi'(k)[[y_1,\ldots,y_d,x_{d+2},\ldots,x_{d+h}]]$ of $A$.
Note that $C \subset D \subset A$.

\item[-]
Since the maximal ideal of $D$ is generated by at most $d+h-1$ elements, we can find, by induction hypothesis on $h$, a coefficient field $\phi'':k \to D$ together with a system of parameters $z_1,\ldots,z_d$ to get a formal power series ring $E=\phi''(k)[[z_1,\ldots,z_d]]$ such that $\Frac(E) \to \Frac(D/Q)$ is a separable field extension
for any minimal prime ideal $Q$ of $D$.
\end{enumerate}

All the maps appearing in the diagram are injective and module-finite. We claim that $E \to A$ satisfies the conclusion of Cohen-Gabber theorem. To see this, fix a minimal prime $P \subset A$ and form the following commutative diagram of quotient fields.
$$
\begin{CD}
\Frac(A/P) \\
@| \\
\Frac(D/D \cap P)(x_1,\ldots,x_{d+1}) @<f_1<< \Frac(D/D \cap P) @<f_2<< \Frac(E) \\
@AAA @AAA \\
\Frac(C)(x_1,\ldots,x_{d+1}) @<f_3<< \Frac(C) \\
@| \\
\Frac(B/B \cap P)
\end{CD}
$$
Note that $E$ and $C$ are domains over which $A$ is module-finite and torsion free, so we have $E \cap P=(0)$ and $C \cap P=(0)$. We need to prove that $\Frac(E) \to \Frac(A/P)$ is a separable field extension. By construction, $f_3$ is separable and $f_1$ is obtained by adjoining $x_{1},\ldots,x_{d+1}$ to $\Frac(D/D \cap P)$. Hence $f_1$ is separable. That is, we proved that $f_1 \circ f_2$ is separable.
\end{proof}

We end this paper with the following example. For a ring map $R \to S$, let $\Omega_{S/R}$ denote the module of differentials of $S$ over $R$. It is regarded as an $S$-module.

\begin{example}
Consider the following two integral domains;
\begin{eqnarray*}
A & = & \mathbb{F}_{p}(t)[[X,Y]]/(tX^{p} + Y^{p}) , \\
B & = & \mathbb{F}_{p}(t)[X,Y]/(tX^{p} + Y^{p}) ,
\end{eqnarray*}
where $t$ is transcendental over $\mathbb{F}_{p}$ and
$X$, $Y$ are variables.

We have $\Omega_{B/\mathbb{F}_{p}(t)} = BdX + BdY \simeq B^{\oplus 2}$.
Here, assume that $\mathbb{F}_{p}(t)[z] \hookrightarrow B$ is a module-finite map
for some $z \in B$.
Note that
\[
\Omega_{B/\mathbb{F}_{p}(t)[z]} = \Omega_{B/\mathbb{F}_{p}(t)}/Bdz .
\]
Since it is not a torsion $B$-module, $\Frac(B)$ is not separable 
over $\Frac(\mathbb{F}_{p}(t)[z])$.

Let $w$ be any non-zero element in the maximal ideal of $A$.
Then, $\mathbb{F}_{p}(t)[[w]] \rightarrow A$ is a module-finite extension.
Then, we have
\[
\Omega_{A/\mathbb{F}_{p}(t)[[w]]} = AdX + AdY/Adw ,
\]
and it is not a torsion $A$-module.
Hence, $\Frac(A)$ is not separable 
over $\Frac(\mathbb{F}_{p}(t)[[w]])$.

On the other hand, put $s = t+X \in A$.
Then, $\mathbb{F}_{p}(s)$ is another coefficient field of $A$, and
\[
A = \mathbb{F}_{p}(s)[[X,Y]]/((s-X)X^{p} + Y^{p}) .
\]
Then, $\mathbb{F}_{p}(s)[[Y]] \rightarrow A$ is module-finite and
$\Frac(A)$ is a separable field extension over $\Frac(\mathbb{F}_{p}(s)[[Y]])$.
\end{example}


\begin{thebibliography}{99}




\bibitem{Bour}
N. Bourbaki, \emph{Alg\`ebre commutative},  Chap. 1--Chap. 9. Hermann, 1961--83.


\bibitem{Cohen}
I. S. Cohen, \emph{On the structure and ideal theory of complete local rings}, Trans. Amer. Math. Soc. 
$\bf{59}$ (1946), 54--106.





\bibitem{GaOr}
O. Gabber and F. Orgogozo, \emph{Sur la p-dimension des corps},
Invent. Math. $\bf{174}$ (2008), 47--80.


\bibitem{Ger}
S. M. Gersten, \emph{A short proof of the algebraic Weierstrass Preparation Theorem}, Proc. Amer. Math. Soc. $\bf{88}$ (1983) 751--752.


\bibitem{Book}
L. Illusie, Y. Laszlo, and F. Orgogozo, \emph{Travaux de Gabber sur l'uniformisation locale et la cohomologie \'etale des sch\'emas quasi-excellents}, Ast\'erisque No. {\bf363-364}  (2014). Soci\'et\'e Math\'ematique de France, Paris, 2014.

\bibitem{M}
H. Matsumura, \emph{Commutative ring theory}, Cambridge
Studies in Advanced Mathematics ${\bf8}$, Cambridge University
Press, Cambridge 1986.

\bibitem{OchShim}
T. Ochiai and K. Shimomoto, \emph{Specialization method in Krull dimension two and Euler system theory over normal deformation rings}, in preparation.


\bibitem{SwHu}
I. Swanson and C. Huneke, \emph{Integral closure of ideals rings, and modules}, London Math. Soc. Lecture Note Ser. ${\bf336}$. Cambridge University Press, Cambridge, 2006.
\end{thebibliography}
\end{document}